\providecommand{\U}[1]{\protect\rule{.1in}{.1in}}
\newtheorem{theorem}{Theorem}
\newtheorem{remark}[theorem]{Remark}
\newenvironment{proof}[1][Proof]{\noindent\textbf{#1.} }{\ \rule{0.5em}{0.5em}}
\begin{document}

\title{Lyapunov stability of an SIRS epidemic model with varying population}
\author{Alberto d'Onofrio$^{\text{a}}$, Piero Manfredi$^{\text{b}}$, Ernesto
Salinelli$^{\text{c}}$\thanks{Corresponding author}\\$^{\text{a}}$International Prevention Research Institute, \\15 Chemin du Saquin, BAT.G, \\69130 Ecully (Lyon), France\\$^{\text{b}}$Department of Economics and Management,\\University of Pisa,\\Via C. Ridolfi, 10, 56124 Pisa, Italy\\$^{\text{c}}$Department of Translational Medicine, \\University of Piemonte Orientale, \\Via Solaroli 17, 28100 Novara, Italy}
\date{}
\maketitle

\begin{abstract}
In this paper we consider an SIRS epidemic model under a general assumption of
density-dependent mortality. We prove the global stability of the disease-free
equilibrium and propose a Lyapunov function that allows to demonstrate the
global stability of the (unique) endemic state under broad conditions.

\noindent\noindent\noindent\emph{Key words}: Lyapunov function, SIRS models,
variable population, global asymptotic stability, endemic equilibrium.

\end{abstract}

\section{Introduction}

Since the seminal work initiated a century ago by Kermack and McKendrik (see
\cite{KeKe27}, \cite{CA08}), Mathematical Epidemiology has undertaken an
extraordinary development to the point that mathematical models nowadays
represent a key support of public policies aimed to control infectious
diseases. All this is based on refinements of a few basic deterministic models
with a simple structure, where the population is divided into states or
\textquotedblleft compartments\textquotedblright\ representing the status with
respect to the infection such as e.g., susceptible, infectious, and immune, as
in classical SIR and SIRS models (see \cite{CA08}), which individuals can
visit according to simple transition rules. These simple structures can be
generalized allowing to (i) model any type of infectious diseases, ranging
from vaccine preventable to vertically or sexually transmitted and to
vector-borne \cite{Anderson}, to (ii) include any type of intermediate
determinants of epidemiological outputs such as e.g., the role of individual's
age in transmission, geographic structures, population dynamics, and control
variables \cite{Anderson}, such as vaccination or treatments, up to the
psychological dimension of human behavior \cite{MAdO13}, and to (iii) include
different kinds of nonlinearities in social contacts and transmission
processes \cite{CA08}.\newline The basic models of infection spread are
deterministic, continuous time, and expressed by a system of nonlinear
ordinary differential equation (ODEs). The main dynamic features of these
models are the existence of a disease-free equilibrium and, provided an
appropriate parameter representing the reproduction of infection is above a
threshold, of an endemic equilibrium where the infection persists. A key issue
regards the (local and/or global) stability of these equilibria.\newline The
involved mathematics follow two alternative approaches. The first one is based
on Poincar\'{e}-Bendixon theory for planar systems and its recent
multidimensional extension (see \cite{LiMu96} and \cite{BudOLa08} for an
application). The second uses the classical Lyapunov direct method, still
widely applied (see \cite{Ko06}, \cite{KoMa04}, \cite{KoWa02},
\cite{LaOmKi2011}, \cite{LaOmSeBe15}, \cite{LiYaXiLi2016}, \cite{LiZhLiCh17},
\cite{OR10} and \cite{Va11}).

\noindent In \cite{OR10}, the following SIRS model was proposed%
\begin{equation}
\left\{
\begin{array}
[c]{l}%
X^{\prime}=b(N-pY)-\mu X-\beta X\dfrac{Y}{N}+\alpha Z\\
Y^{\prime}=bpY+\beta X\dfrac{Y}{N}-\left(  \mu+\nu+\delta\right)  Y\\
Z^{\prime}=\nu Y-\left(  \alpha+\mu\right)  Z
\end{array}
\right. \label{SIRS_R}%
\end{equation}
where: $b>0$ and $\mu>0$ are the birth and death rates, $\beta>0$ is the
transmission rate, $\nu>0$ the rate of recovery from infection, $\delta>0$ the
disease-specific mortality rate, $p\in\left(  0,1\right)  $ the fraction of
vertically infected newborn and $\alpha>0$ the rate of return to
susceptibility by loss of immunity. The stability analysis of the endemic
equilibrium was performed by Lyapunov direct method under the very special
assumption of constant population size $N$. However, it is easy to verify that
$N=X+Y+Z$ is constant under the highly special condition on model parameters%
\[
\beta\left(  b-\mu\right)  \left(  \alpha+\mu+\nu\right)  =\delta\left(
\alpha+\mu\right)  \left(  \left(  pb+\beta\right)  -\left(  \mu+\nu
+\delta\right)  \right)
\]
which makes this case totally uninteresting as missing any non trivial
dynamics.\newline The aim of this paper is therefore to re-analyze the
stability of steady states of model (\ref{SIRS_R}) in non trivial conditions
by appropriately reformulating the dynamics of the population under a general
assumption of density-dependent mortality. We demonstrate the global stability
of the disease-free equilibrium and propose a Lyapunov function that allows to
demonstrate the stability of the (unique) endemic state under broad conditions.

\section{Equilibria and their stability}

Following \cite{GaHe}, \cite{d'OMaSa08}, we assume that $\mu=\mu\left(
N\right)  $ is strictly increasing, with $b>\mu\left(  0\right)  $ and
$b<\mu\left(  +\infty\right)  $. The resulting population dynamics is%
\begin{equation}
N^{\prime}=\left(  b-\mu\left(  N\right)  \right)  N-\delta
Y.\label{dynamic_population_true}%
\end{equation}
Note that, since the equation $y^{\prime}=\left(  b-\mu\left(  y\right)
\right)  y$ admits a unique globally asymptotically stable (GAS) equilibrium
$y^{\ast}$, our variant to model (\ref{SIRS_R}) can be studied on the
positively invariant set%
\[
\mathcal{D}=\left\{  \left(  X,Y,Z,N\right)  :X\geq0,Y\geq0,Z\geq0,X+Y+Z=N\leq
N^{\ast}\right\}  .
\]
Since $N$ is not constant, it is convenient to pass to fractions $S=X/N$,
$I=X/N$ and $R=Z/N$, obtaining the same epidemiological structure of system
(\ref{SIRS_R}):%
\begin{equation}
\left\{
\begin{array}
[c]{l}%
S^{\prime}=b(1-S-pI)-\left(  \beta-\delta\right)  SI+\alpha R\\
I^{\prime}=\left(  \beta S-\left(  \left(  1-p\right)  b+\nu+\delta\right)
+\delta I\right)  I\\
R^{\prime}=\nu I-\left(  b+\alpha-\delta I\right)  R
\end{array}
\right.  \label{SIRS_fra}%
\end{equation}
completed by the following equation for the population:
\[
N^{\prime}=N\left(  b-\mu\left(  N\right)  \right)  -\delta I.
\]
It is simple to verify that the region%
\begin{equation}
\mathcal{D}^{\text{fra}}=\left\{  \left(  S,I,R\right)  :S\geq0,I\geq
0,R\geq0,~S+I+R=1\right\}  \label{region}%
\end{equation}
is positive invariant and attractive. Moreover, model (\ref{SIRS_fra}) always
admits the disease-free equilibrium $E_{0}=\left(  1,0,0\right)
\in\mathcal{D}^{\text{fra}}$. By setting%
\[
\ \ \ \ \ \ \ \ \ \ \ \ R_{0}=\dfrac{\beta}{\gamma},\ \ \ \ \gamma=\left(
1-p\right)  b+\nu+\delta
\]
it results that $E_{0}$ is the unique equilibrium when $R_{0}\leq1$, where
$R_{0}$ represents the appropriate reproduction number for system
(\ref{SIRS_fra}). Indeed, following \cite{BuDr90}, by conditions $S^{\prime
}=0$, $I^{\prime}=0$ and $S=1-I-R$, for $S>0$ and $I>0$, one easily obtain the
equality%
\[
\gamma\left(  1-R_{0}\right)  S+b\left(  1-S-pI\right)  +\left(  \alpha+\beta
S\right)  R=0
\]
that shows that no solutions can exist when $R_{0}\leq1$.\newline Next, we
prove that, if $R_{0}>1$, a unique endemic equilibrium $E_{1}=\left(
S_{e},I_{e},R_{e}\right)  \in\mathcal{D}^{\text{fra}}$ for (\ref{SIRS_fra})
exists. Since $S+I+R=1$, we can consider the reduced system
\begin{equation}
\left\{
\begin{array}
[c]{l}%
I^{\prime}=\left(  \beta\left(  1-I-R\right)  -\gamma+\delta I\right)  I\\
R^{\prime}=\nu I-\delta\left(  \rho-I\right)  R
\end{array}
\right.  \label{reduced}%
\end{equation}
where $\rho=\left(  b+\alpha\right)  /\delta$, on the invariant set
\[
\overline{\mathcal{D}}_{0}^{\text{fra}}=\left\{  \left(  I,R\right)
:I\geq0,R\geq0,~I+R\leq1\right\}  \backslash\left\{  \left(  0,0\right)
\right\}
\]
and show that (\ref{reduced}) has a unique positive solution $\widehat{E}%
_{1}=\left(  I_{e},R_{e}\right)  \in\overline{\mathcal{D}}_{0}^{\text{fra}}$
when $R_{0}>1$ (equivalent to $\beta>\gamma$). Note preliminarily that
condition $R_{0}>1$ implies $\beta>\delta$ and it can occur $R^{\prime}=0$
only if $I<\rho$. By solving $I^{\prime}=R^{\prime}=0$, we obtain the
quadratic equation in $I$%
\[
P\left(  I\right)  =-\delta\left(  \beta-\delta\right)  I^{2}+\left(
\delta\rho\left(  \beta-\delta\right)  +\left(  \beta-\gamma\right)
\delta+\beta\nu\right)  I-\delta\rho\left(  \beta-\gamma\right)  =0.
\]
Since ($P\left(  0\right)  <0$) and%
\[
\lim\limits_{I\rightarrow\rho^{-}}P\left(  I\right)  =\beta\nu\rho>0
\]
there is only one solution of $P\left(  I\right)  =0$ smaller than $\rho$.
Therefore, if $\rho\leq1$ the proof immediately follows, while if $\rho>1$
note that $P\left(  1\right)  =\delta\left(  \rho-1\right)  \left(
\gamma-\delta\right)  +\beta\nu>0$, and again the claim follows.\newline We
first obtain the global stability of the disease-free equilibrium by adopting
the Lyapunov function $I(t)$.

\begin{theorem}
The disease-free equilibrium $E_{0}\in\mathcal{D}^{\text{fra}}$ is GAS in
$\mathcal{D}^{\text{fra}}$ if and only if $R_{0}\leq1$, and unstable for
$R_{0}>1$.
\end{theorem}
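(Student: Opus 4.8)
The plan is to adopt the scalar function $V=I$ suggested by the statement and work on the reduced planar system \eqref{reduced}, which captures the full dynamics on $\mathcal{D}^{\text{fra}}$ via $S=1-I-R$. Along solutions, $V'=I'=g(I,R)\,I$ with $g(I,R)=\beta(1-I-R)-\gamma+\delta I$. The first step is to prove that $g\le 0$ on the whole triangle $T=\{I\ge0,R\ge0,\ I+R\le1\}$ whenever $R_{0}\le1$. The crucial observation is that $g$ is affine in $(I,R)$, so its maximum over $T$ is attained at a vertex; evaluating $g(0,0)=\beta-\gamma$, $g(1,0)=\delta-\gamma$ and $g(0,1)=-\gamma$, and using $\gamma=(1-p)b+\nu+\delta>\delta$, all three vertex values are nonpositive precisely when $\beta\le\gamma$, i.e. $R_{0}\le1$. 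Hence $V'\le0$ throughout $\mathcal{D}^{\text{fra}}$, with $V'<0$ whenever $I>0$.

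I expect the main obstacle to be exactly this sign control: the extra $+\delta I$ contribution in $I'$ (absent from the usual bilinear SIR/SIRS equations) could a priori force $V'>0$ for large $I$, so one must use the constraint $I+R\le1$ together with $\gamma>\delta$ to tame it. The affine/vertex argument is what makes this transparent, and it simultaneously covers the boundary case $R_{0}=1$, where $g(I,R)=-(\beta-\delta)I-\beta R$ vanishes only at $(I,R)=(0,0)$.

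Next I would invoke LaSalle's invariance principle on the compact, positively invariant set $\mathcal{D}^{\text{fra}}$. Since $V'=g(I,R)\,I=0$ forces $I=0$ (because $g<0$ at every point with $I>0$), the set $\{V'=0\}$ reduces to $\{I=0\}$. On $\{I=0\}$ the second equation becomes $R'=-\delta\rho R$, so $R(t)\to0$; therefore the largest invariant subset of $\{V'=0\}$ is the single point $(I,R)=(0,0)$, i.e. $E_{0}$. LaSalle then yields global attractivity of $E_{0}$ for every $R_{0}\le1$, the case $R_{0}=1$ included. Lyapunov stability follows because $I$ is nonincreasing (so small $I(0)$ keeps $I(t)$ small), and the $R$-equation $R'=\nu I-\delta(\rho-I)R$ then keeps $R$ small through its stable linear part $-\delta\rho R$; together these give global asymptotic stability.

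Finally, for the converse I would linearize \eqref{reduced} at $E_{0}=(0,0)$. The Jacobian is lower triangular, $\left(\begin{smallmatrix}\beta-\gamma & 0\\ \nu & -\delta\rho\end{smallmatrix}\right)$, with eigenvalues $\beta-\gamma$ and $-\delta\rho<0$. When $R_{0}>1$, i.e. $\beta>\gamma$, the eigenvalue $\beta-\gamma>0$ makes $E_{0}$ a saddle, hence unstable; this rules out global stability and is consistent with the endemic equilibrium $E_{1}$ existing in this regime. Combining (a) $R_{0}\le1\Rightarrow E_{0}$ GAS and (b) $R_{0}>1\Rightarrow E_{0}$ unstable establishes both the equivalence and the instability claim.
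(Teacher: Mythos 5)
Your proposal is correct and follows essentially the same route as the paper: the Lyapunov function $V=I$, LaSalle's invariance principle on $\mathcal{D}^{\text{fra}}$, and linearization for the instability claim when $R_{0}>1$. Your affine/vertex argument for $g\le 0$ cleanly unifies the paper's two cases ($\beta>\delta$ versus $\delta\ge\beta$), and your identification of the largest invariant subset of $\{I=0\}$ as the single point $E_{0}$ is in fact more careful than the paper's assertion that the DFE is the only \emph{positively} invariant subset of $\{I'=0\}$ (the whole segment $\{I=0\}$ is positively invariant, so one must argue, as you do, via the decay $R'=-\delta\rho R$ or via full invariance).
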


\begin{proof}
By linearization it is easy to see that $E_{0}$ is locally asymptotically
stable when $R_{0}<1$, and unstable when $R_{0}>1$. We assume in the following
$R_{0}\leq1$ and we show that $L_{\text{DFE}}\left(  t\right)  =I\left(
t\right)  $ is a Lyapunov function. In fact, as $S=1-I-R$, we can write%
\[
I^{\prime}=\left(  \gamma\left(  R_{0}-1\right)  -\beta R-\left(  \beta
-\delta\right)  I\right)  I.
\]
Then, by $R_{0}\leq1$ and $\beta>\delta$, we immediately obtain $I^{\prime
}\leq0$. If $\delta\geq\beta$, for $R_{0}<1$, as $\gamma\left(  R_{0}%
-1\right)  =\beta-\gamma$, it follows
\[
I^{\prime}=\left(  -\beta R-\left(  \delta-\beta\right)  (1-I)-\left(  \left(
1-p\right)  b+\nu\right)  \right)  I\leq0.
\]
Since the DFE is the only positively invariant subset of $\left\{  \left(
S,I,R\right)  :I^{\prime}=0\right\}  $, by LaSalle Invariance Principle, we
conclude that $E_{0}$ is GAS for $R_{0}\leq1$.
\end{proof}

\noindent We show now that, under suitable assumptions, there exists a
Lyapunov function for the endemic equilibrium $\widehat{E}_{1}$ of system
(\ref{reduced}).

\begin{theorem}
If $R_{0}>1$ and $\beta\leq\gamma+\dfrac{\rho\left(  \gamma-\delta\right)
}{1-\rho}$, the unique endemic equilibrium $\widehat{E}_{1}$ of system
(\ref{reduced}) is GAS on $\overline{\mathcal{D}}_{0}^{\text{fra}}$.
\end{theorem}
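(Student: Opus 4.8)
The plan is to apply the Lyapunov direct method with a Goh--Volterra--type function, closed off by the LaSalle invariance principle. Writing $u=I-I_e$ and $v=R-R_e$, I would take
\[
V(I,R)=\left(I-I_e-I_e\ln\frac{I}{I_e}\right)+\frac{c}{2}\left(R-R_e\right)^2,
\]
with a weight $c>0$ to be fixed later. This $V$ is defined and positive definite about $\widehat{E}_1$ on the half-plane $\{I>0\}$ (the logarithmic part is strictly convex with its minimum at $I_e$, and it blows up as $I\to0^+$, which will keep sublevel sets compact inside $\{I>0\}$). Since $\widehat{E}_1$ lies in the interior, with $I_e>0$ and $R_e=\nu I_e/(\delta(\rho-I_e))>0$ because $I_e<\rho$, and since the edge $\{I=0\}$ is invariant with its flow collapsing to the (excluded) origin, the content of the statement really concerns trajectories in $\{I>0\}$, where $V$ is a legitimate candidate.

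First I would put the two equations in deviation form, using the equilibrium identities $(\beta-\gamma)-(\beta-\delta)I_e-\beta R_e=0$ and $\nu I_e=\delta(\rho-I_e)R_e$, which give $I'=\left(-(\beta-\delta)u-\beta v\right)I$ and $R'=(\nu+\delta R_e)u-\delta(\rho-I_e)v+\delta uv$. Differentiating $V$ along solutions then yields
\[
\dot V=-(\beta-\delta)u^2+\left(c(\nu+\delta R_e)-\beta\right)uv-c\delta(\rho-I_e)v^2+c\delta\,uv^2.
\]
The decisive step is to choose $c=\beta/(\nu+\delta R_e)$, which annihilates the sign-indefinite $uv$ term; the surviving $uv^2$ term then recombines with the $v^2$ term through $(\rho-I_e)-u=\rho-I$, collapsing the whole expression to
\[
\dot V=-(\beta-\delta)(I-I_e)^2-c\delta(\rho-I)(R-R_e)^2.
\]
Because $R_0>1$ forces $\beta>\delta$, the first term is unconditionally nonpositive, and the entire right-hand side is $\le0$ precisely on the set where $\rho-I\ge0$.

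This is exactly where the hypothesis enters, and I expect it to be the main obstacle, since $\dot V$ is not globally sign-definite when $\rho<1$. The condition $\beta\le\gamma+\rho(\gamma-\delta)/(1-\rho)$ is algebraically equivalent to $\beta(1-\rho)\le\gamma-\delta\rho$, that is to $(\beta-\gamma)-(\beta-\delta)\rho\le0$, which is precisely the statement that $I'\le0$ on the line $\{I=\rho\}$ (the bracket in $I'$ is decreasing in $R$, so its sign there is governed by $R=0$). I would use this to show that $\{I\le\rho\}\cap\overline{\mathcal{D}}_0^{\text{fra}}$ is positively invariant and, moreover, globally attracting: on $\{I>\rho\}$ the same inequality gives $I'<0$ with rate bounded away from zero while $I$ remains above $\rho$, so every trajectory enters $\{I\le\rho\}$ in finite time (when $\rho\ge1$ this is automatic, as $I\le1\le\rho$ throughout the region). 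On this attracting invariant set one has $\dot V\le0$, with equality only at $I=I_e,\ R=R_e$ (here $\rho-I_e>0$ is used to force $R=R_e$ once $I=I_e$), so the largest invariant subset of $\{\dot V=0\}$ reduces to $\{\widehat{E}_1\}$. LaSalle's invariance principle then delivers global attractivity of $\widehat{E}_1$ on $\{I>0\}$, and combined with the positive definiteness of $V$ this gives the asserted global asymptotic stability.
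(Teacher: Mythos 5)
Your proof is correct and takes essentially the same route as the paper: the identical Goh--Volterra plus quadratic Lyapunov function with weight $c=\beta/(\nu+\delta R_{e})$, the same collapse of the derivative to $-(\beta-\delta)(I-I_{e})^{2}-c\delta(\rho-I)(R-R_{e})^{2}$, and the same use of the hypothesis (equivalent to $I_{u}=(\beta-\gamma)/(\beta-\delta)\leq\rho$) to make $\{I\leq\rho\}$ positively invariant and attracting before applying LaSalle.
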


\begin{proof}
Since, by definition, at $\widehat{E}_{1}$ it holds:%
\begin{equation}
\left\{
\begin{array}
[c]{l}%
\beta\left(  1-I_{e}-R_{e}\right)  -\gamma+\delta I_{e}=0\\
\nu I_{e}-\delta\left(  \rho-I_{e}\right)  R_{e}=0
\end{array}
\right.  \label{EE}%
\end{equation}
and $IR-I_{e}R_{e}=I\left(  R-R_{e}\right)  +R_{e}\left(  I-I_{e}\right)  $,
we can rewrite system (\ref{reduced}) as:%
\begin{equation}
\left\{
\begin{array}
[c]{l}%
I^{\prime}=-\left(  \left(  \beta-\delta\right)  \left(  I-I_{e}\right)
-\beta\left(  R-R_{e}\right)  \right)  I\\
R^{\prime}=\left(  \nu+\delta R_{e}\right)  \left(  I-I_{e}\right)
-\delta\left(  \rho-I\right)  \left(  R-R_{e}\right)
\end{array}
\right.  \nonumber
\end{equation}
Let us now consider the positive functions on $\overline{\mathcal{D}}%
_{0}^{\text{fra}}\backslash\left\{  \left(  I_{e},R_{e}\right)  \right\}  $
\[
L_{1}\left(  t\right)  =I-I_{e}-I_{e}\ln I;\ \ \ \ \ L_{2}\left(  t\right)
=\dfrac{\beta}{2\left(  \nu+\delta R_{e}\right)  }\left(  R-R_{e}\right)
^{2}.
\]
Along the solutions of (\ref{reduced}) we have%
\begin{align*}
\dfrac{dL_{1}\left(  t\right)  }{dt} &  =\dfrac{I-I_{e}}{I}I^{\prime}=-\left(
\beta-\delta\right)  \left(  I-I_{e}\right)  ^{2}-\beta\left(  R-R_{e}\right)
\left(  I-I_{e}\right)  \\
\dfrac{dL_{2}\left(  t\right)  }{dt} &  =\dfrac{\beta}{2}\left(
I-I_{e}\right)  \left(  R-R_{e}\right)  -\delta\left(  \rho-I\right)  \left(
R-R_{e}\right)  ^{2}%
\end{align*}
Therefore, function $L_{\text{EE}}=L_{1}+L_{2}$ is positive for $\left(
I,R\right)  \neq\left(  I_{e},R_{e}\right)  $ and%
\[
L_{\text{EE}}^{\prime}\left(  t\right)  =-\left(  \beta-\delta\right)  \left(
I-I_{e}\right)  ^{2}-\delta\left(  \rho-I\right)  \left(  R-R_{e}\right)
^{2}.
\]
Note that if $\rho\geq1$, then $L_{\text{EE}}$ is a Lyapunov function and the
endemic equilibrium is GAS. In passing, we also note that (see the second
equation in (\ref{EE}) ) it holds $I_{e}<\rho$, which means that
$L_{\text{EE}}$ acts as a local Lyapunov function for system (\ref{reduced}%
).\newline Consider now case $\rho<1$. Observe that the set $\left\{  \left(
I,R\right)  \in\overline{\mathcal{D}}_{0}^{\text{fra}}:I^{\prime}=0\right\}  $
is a straight line that intersects the line $I=0$ at a point $I_{u}$
fulfilling%
\[
0<I_{u}=\dfrac{\beta-\gamma}{\beta-\delta}=\dfrac{\beta-\left(  \left(
1-p\right)  b+\nu+\delta\right)  }{\beta-\delta}=1-\dfrac{\left(  1-p\right)
b+\nu}{\beta-\delta}<1.
\]
If $I_{u}\leq\rho$, the set $\Omega=\left\{  \left(  I,R\right)  \in
\overline{\mathcal{D}}_{0}^{\text{fra}}:I<\rho\right\}  $ is positively
invariant. In fact, it is easy to verify that $I_{u}\leq\rho$ if and only if%
\[
\beta-\gamma-\rho\left(  \beta-\delta\right)  \leq0.
\]
This implies that on $\left\{  \left(  I,R\right)  \in\overline{\mathcal{D}%
}_{0}^{\text{fra}}:I=\rho\right\}  $ we have $I^{\prime}\leq0$. Furthermore,
$\Omega$ is attractive as%
\[
I^{\prime}\leq\beta-\gamma-\left(  \beta-\delta\right)  I<\beta-\gamma
-\rho\left(  \beta-\delta\right)
\]
for each $I\in\left[  \rho,I_{u}\right]  $. Since $L_{\text{EE}}$ is a
Lyapunov function on $\Omega$, the endemic equilibrium $\widehat{E}_{1}$ is
GAS when $I_{u}\leq\rho$.
\end{proof}

\noindent The result just obtained straightforwardly extends to the endemic
equilibrium $E_{1}$ of system (\ref{SIRS_fra}).

\begin{remark}
It is possible to verify that the case $I_{u}>\rho$ in the previous proof is
far from trivial. In fact, $I_{u}>\rho$ if and only if
\[
\beta-\gamma-\rho\left(  \beta-\delta\right)  >0.
\]
Indeed, with simple manipulations, we obtain the equivalent condition%
\[
\beta>\gamma+\dfrac{\rho\left(  \gamma-\delta\right)  }{1-\rho}.
\]
showing that this particular case deals with situations where the interplay
between demographic and epidemiological parameters favour a relatively high
infection transmission.
\end{remark}

\begin{remark}
The present characterization of the stability of equilibria allows clear
conclusions about the effects of endemicity on the dynamics of the population
\[
N^{\prime}=\left(  b-\mu\left(  N\right)  -\delta I\right)  N.
\]
Indeed, if the endemic state is GAS, then
\[
N^{\prime}\rightarrow(b-\delta I_{e}-\mu(N))N
\]
which implies that the disease will bring the extinction of the population
under the condition:
\[
b\leq\delta I_{e}+\mu(0).
\]
Conversely, if the previous condition is not met, then the disease and the
population will reach the equilibrium state $N_{e}=\mu^{-1}(b-\delta I_{e})$
where the persistent presence of the disease will regulate the population size
\cite{BuDr90}, \cite{GaHe}, \cite{MeHe92} \cite{d'OMaSa08}.
\end{remark}

\noindent\textbf{Acknowledgments. }The financial support of Universit\`{a} del
Piemonte Orientale is acknowledged by the authors.

\end{document}